\numberwithin{equation}{section}
\renewcommand{\phi}{\varphi}
\renewcommand{\O}{\mathcal{O}}
\renewcommand{\H}{\operatorname{H}}
\newcommand{\Het}{\H_{\mathrm{et}}}
\newcommand{\Pic}{\mathrm{Pic}}
\newcommand{\fPic}{\mathrm{\widehat Pic}}
\newcommand{\Br}{\mathrm{Br}}
\newcommand{\fBr}{\mathrm{\widehat Br}}
\newcommand{\B}{\mathrm{B}}
\newcommand{\Lie}{\operatorname{Lie}}
\newcommand{\Spec}{\operatorname{Spec}}
\newcommand{\GG}{\mathbb{G}}
\newcommand{\PP}{\mathbb{P}}
\newcommand{\QQ}{\mathbb{Q}}
\newcommand{\TT}{\mathbb{T}}
\newcommand{\ZZ}{\mathbb{Z}}
\newcommand{\M}{\mathcal{M}}
\newcommand{\Mo}{\M^{\mathrm{fin}}}
\newtheorem{theo}{Theorem}
\newtheorem{lemm}[theo]{Lemma}
\newtheorem{prop}[theo]{Proposition}
\theoremstyle{definition}
\newtheorem*{exam*}{Example}
\newtheorem*{defi*}{Definition}
\title{\bf~K3 spectra}
\author{Markus Szymik}
\date{January 2009}
\begin{document}

\maketitle

\begin{abstract}
\noindent
  The notion of a~K3 spectrum is introduced in analogy with that of an
  elliptic spectrum and it is shown that there are ``enough''~K3
  spectra in the sense that for all~K3 surfaces~$X$ in a suitable
  moduli stack of~K3 surfaces there is a~K3 spectrum whose underlying
  ring is isomorphic to the local ring of the moduli stack in~$X$ with
  respect to the etale topology, and similarly for the ring of formal
  functions on the formal deformation space.
\end{abstract}

\thispagestyle{empty}


\section{Introduction}

Stable homotopy theory may be defined as the theory of the sphere
spectrum~$S$ and its homotopy groups~$\pi_*S$, the stable homotopy
groups of spheres. But, just as number theory may be defined as the
theory of the integers, in practice that entails the study of other
structures such as fields, both global and local, Galois groups, their
representations, deformations, and many other things which --
typically -- are harder to construct, but easier to understand.

In stable homotopy theory, some of the auxiliary spectra encountered
are the Eilenberg-MacLane spectra such as~$\mathrm{H}\QQ$, leading to
rational homotopy theory, topological K-theory spectra~$\mathrm{KU}$
and~$\mathrm{KO}$, elliptic spectra and~$\mathrm{TMF}$, and many
more. The ``chromatic'' hierarchy of these is organised by the complex
bordism spectrum~$\mathrm{MU}$, which is relevant for the fact
that~$\pi_*\mathrm{MU}$ represents graded formal group laws rather
than for its relationship with manifolds. For example,~$\mathrm{H}\QQ$
corresponds to the additive formal group law,~$\mathrm{KU}$ to the
multiplicative formal group law, and elliptic spectra to formal group
laws coming from elliptic curves.

There have been efforts to extend the connection between elliptic
curves and spectra to other geometric objects such as curves of higher
genus, see~\cite{Gorbounov+Mahowald} and~\cite{Ravenel}, and abelian
varieties of higher dimension, see~\cite{Behrens+Lawson}. The aim of
this report is to present some other part of arithmetic geometry which
may be lurking behind the next layers of the chromatic hierarchy:~K3
surfaces and their corresponding~K3 spectra.

It is not a new idea to use~K3 surfaces, and more generally
Calabi-Yau varieties, as generalisations of elliptic curves, not
even in topology. I first read about the idea of cohomology theories
related to~K3 surfaces in Thomas' writings~\cite{Thomas:Book}
and~\cite{Thomas:Survey}, where he refers to Morava. The latter was so
kind to share his notes~\cite{Morava} based then again on a lecture of
Hopkins. See also~\cite{Devoto} for a recent contribution to the
problem of finding a differential geometric (rather than homotopical)
description of~K3 cohomology.

In recent years a lot of work has been done, on the side of algebraic
topology just as well as on the side of arithmetic geometry, and
suggests to have a fresh look at this connection. For example, the
papers~\cite{vanderGeerKatsura:Stratification}
and~\cite{Ogus:HeightStrata} give detailed accounts of the height
stratification on the moduli stacks of~K3 surfaces in prime
characteristic. And there are now various different methods to produce
new species in the ``brave new world'' of highly structured ring
spectra and localisations thereof, see~\cite{Rezk:HopkinsMiller},
\cite{GoerssHopkins:Summary}, and~\cite{Lurie:Elliptic} to name a
few. How far these apply to~K3 spectra is the author's work in
progress and will be addressed elsewhere~\cite{Szymik}. 

The purpose here is to show that there are ``enough''~K3 spectra in
the sense that for all~K3 surfaces~$X$ in a suitable moduli stack of
K3 surfaces there is a~K3 spectrum whose underlying ring is isomorphic
to the local ring of the moduli stack in~$X$ with respect to the
etale topology, and similarly for the ring of formal functions on the
formal deformation space. The precise statements are given in
Propositions~\ref{prop:local} and~\ref{prop:formal}, which follow from
the main Theorem~\ref{thm:main}. The proofs for these are in
Section~5. Sections 2 and 3 review the geometry and arithmetic of~K3
surfaces, respectively, and Section 4 gives the definition of~K3
spectra and examples over the rationals.


\section{K3 surfaces}

The aim of this section is to review the geometry of~K3 surfaces as
far as needed in the rest of the text. All fields will be assumed
perfect from now on.

\subsection{Definition of~K3 surfaces}

An elliptic curve over a field~$k$ is a smooth proper curve~$X$ such
that the canonical bundle~\hbox{$\omega_X=\bigwedge^1(\Omega_X)$} is
trivial, with a chosen base point. The base point can be used for
various purposes. It can be used to impose an abelian group structure
on the curve. And it can be used to define a Weierstrass embedding
into the projective plane.

One dimension higher, there are two kinds of smooth proper surfaces
$X$ such that the canonical bundle~$\omega_X=\bigwedge^2(\Omega_X)$ is
trivial: abelian surfaces and another class of surfaces which satisfy
the additional condition~$\H^1(X,\O_X)=0$: the {\it~K3 surfaces}.  See
the textbooks~\cite{Beauville}, Chapter~VIII, and~\cite{Badescu},
Chapter 10, for the general theory of~K3 surfaces.

One generalisation of this to even higher dimensions would be
Calabi-Yau~$n$-folds, which are defined by the triviality of the
canonical bundle~\hbox{$\omega_X=\bigwedge^n(\Omega_X)$} and the
vanishing of~$\H^i(X,\O_X)$ for all~$1\leqslant i\leqslant
n-1$. See~\cite{Hirokado} and~\cite{vanderGeerKatsura:Heights}, for
example. Apart from a few side remarks, these will play no r\^ole in
the following.

The Euler characteristic of~K3 surfaces is 24, so that these will
never be abelian groups. However, every~K3 surface~$X$ can be embedded
in some projective space by means of an ample line bundle~$L$ on~$X$;
the choice of such an~$L$ is called a {\it polarisation} of~$X$. This
corresponds to the choice of the base point for elliptic curves. The
self-intersection of~$L$ is called the {\it degree} of the
polarisation; this will always be an even integer~$2d$ for some
$d\geqslant1$.

\subsection{Examples of~K3 surfaces}

It will be useful to bear the following two classes of examples in
mind. For these, the base field~$k$ needs to be of
characteristic~\hbox{$\operatorname{char}(k)\not=2$}.

\begin{exam*}
  The most famous example for a~K3 surface is the {\it Fermat quartic}
  defined by the equation
  \begin{displaymath}
    T_0^4+T_1^4+T_2^4+T_3^4=0
  \end{displaymath}
  in projective 3-space~$\PP^3$. More generally, all smooth quartics
  in~$\PP^3$ define~K3 surfaces.  (See~\cite{Beauville}, Example
  VIII.8, and~10.4 in~\cite{Badescu}.)  This is analogous to the fact
  that smooth cubics in~$\PP^2$ define elliptic curves. However, not
  every~K3 surface can be embedded into~$\PP^3$.
\end{exam*}

\begin{exam*}
  If~$A$ is an abelian surface, the inversion~$[-1]\colon A\rightarrow
  A$ is an involution, and it extends to the blow-up of~$A$ along the
  sixteen 2-torsion points. The quotient of the resulting free action
  on the blow-up by this involution turns out to be a~K3 surface,
  called the {\it Kummer surface} of~$A$. See~\cite{Beauville},
  Example VIII.10, and~10.5 in~\cite{Badescu}.
\end{exam*}

\subsection{Moduli of~K3 surfaces}\label{subsec:moduli}

Given the two classes of examples above, one might want to get an
overview of all~K3 surfaces and how they vary in families, leading to
the question of their moduli. While the moduli stack of elliptic
curves is only 1-dimensional, the moduli stack of Kummer surfaces
is~3-dimensional, and that of quartics in projective space
is~19-dimensional. In general, one may consider the moduli stack
$\M_{2d}$ of polarised~K3 surfaces of fixed
degree~$2d$. See~\cite{Rizov:MixedCharacteristic}. That is a separated
Deligne-Mumford stack of finite type, which is smooth of dimension~19
over~$\ZZ[1/2d]$. The objects of~$\M_{2d}$ should be the pairs
$(X,L)$, where~$X$ is a~K3 surface over some~$\ZZ[1/2d]$-scheme~$S$,
and~$L$ is a polarisation for~$X$; morphisms~\hbox{$(X',L')\rightarrow(X,L)$} are pullback diagrams
\begin{center}
  \mbox{ 
    \xymatrix{
      X'\ar[r]_{\xi}\ar[d]_{}&X\ar[d]^{} \\
      S'\ar[r]^{\sigma}&S
    } 
  }
\end{center}
such that~$\xi^*L=L'$. But, in order to circumvent difficulties with
etale descent, one builds this into the definition of~$\M_{2d}$ by
allowing for objects which become~K3 surfaces only after an etale base
change. Again, see~\cite{Rizov:MixedCharacteristic} for details. The
difference is clearly irrelevant over the spectrum of a strictly
henselian local ring.

\begin{exam*} 
  The case~$d=2$ corresponds to the quartics
  in~$\PP^3$. (See~\cite{Mukai} for other small~$d$.) The space of
  quartics in~$\PP^3$ is a projective space of dimension 
  \begin{displaymath} 
    \dim\H^0(\PP^3;\O_{\PP^3}(4))-1=\binom{7}{4}-1=34.
  \end{displaymath}
  The discriminant locus~$\Delta\subset\PP^{34}$ corresponding to the
  singular quartics is a hyper\-surface. The Veronese embedding shows
  that the open complement~$\PP^{34}\setminus\Delta$ is affine. The
  universal~K3 surface over it has a canonical polarisation given by
  the restriction of~$\O_{\PP^3}(1)$. There results a morphism
  \begin{displaymath}
    \PP^{34}\setminus\Delta\longrightarrow\M_{4}.
  \end{displaymath}
  This is not surjective, as ample line bundles on~K3 surfaces need not
  be very ample; the canonical image in~$\PP^3$ may acquire ordinary
  double points. However, the map factors over the quotient stack
  \begin{displaymath}
    \left(\PP^{34}\setminus\Delta\right)/\!\!/\,\mathrm{PGL}(4)
  \end{displaymath}
  of the affine scheme~$\PP^{34}\setminus\Delta$ by the action of the
  affine group~$\mathrm{PGL}(4)$ which acts by change of co-ordinates,
  and links~$\M_{4}$ to a stack associated to a Hopf algebroid.
\end{exam*}

Finally, it should be pointed out that the moduli problem of
(polarised) Calabi-Yau~$n$-folds is much more complicated if
$n\geqslant3$. There are~$3$-folds with different Hodge numbers, so
that~-- globally~-- there will be many components. The Hodge numbers
also show that~-- locally~-- the obstruction space for deformations
need not be zero (as it is for~K3 surfaces). And in fact there are
examples of Calabi-Yau~3-folds which do not lift to characteristic
zero, see~\cite{Hirokado}.


\section{Formal Brauer groups}

The aim of this section is to review the arithmetic of~K3 surfaces as
far as needed in the rest of the text.

\subsection{Formal groups}

Let~$R$ be a local ring with residue field~$k$. A functor~$\Gamma$
from the category of artinian local~$R$-algebras with residue field
$k$ to the category of abelian groups is a (1-dimensional,
commutative) {\it formal group} if the underlying functor to the
category of sets is (pro-)representable, formally smooth, and
1-dimensional. However, the actual representation is not part of the
data. The choice of a representation (necessarily by a power series
ring~$R[\![T]\!]$) yields a {\it co-ordinate} for~$\Gamma$. With
respect to this co-ordinate the formal group is described by the {\it
  formal group law}, which is a power series in
$R[\![T_1,T_2]\!]$. See the lecture notes~\cite{Fröhlich}
and~\cite{Lazard:LectureNotes} for background information. There are
ways to globalise the notion of a formal group in order to work over
arbitrary rings or even base schemes. In that case, co-ordinates need
not exist globally, only locally. This generality will not be needed
here.

\subsection{The theorem of Artin and Mazur}

Let~$X$ be an elliptic curve over a field~$k$. Its Picard group
$\Pic_X(k)$ is the group of isomorphism classes of line bundles on
$X$. As it stands, this is just a set with an abelian group structure,
but with some work it can be made into an algebraic group~$\Pic_X$,
whose component of the identity is isomorphic to~$X$ by a map which
sends the base point to the unit. The formal completion~$\fPic_X$ is
a~1-dimensional formal group. It has been those formal groups that
arose the interest of algebraic topologists in elliptic curves,
see~\cite{LRS} for example.

For a~K3 surface~$X$, the Picard group is 0-dimensional, so that its
formal Picard group is trivial, but the isomorphism
\begin{displaymath}
	\Pic_X(k)\cong\Het^1(X;\GG_m) 
\end{displaymath}
suggests that one should consider~$\Het^2(X;\GG_m)$ instead. As for a
geometric interpretation, there is an isomorphism
\begin{displaymath}
	\Het^2(X;\GG_m)\cong\Br_X(k)
\end{displaymath}
with the Brauer group~$\Br_X(k)$ of~$X$. Although the notation may
suggest that, it turns out that these are not the~$k$-valued points of
an algebraic group, let alone one whose component of the identity is
isomorphic to~$X$. However, Artin and Mazur~\cite{Artin+Mazur} have
shown that the functor
\begin{displaymath}
  A\longmapsto
  \operatorname{Ker}\left(
    \Het^2(X\times\Spec(A);\GG_m)\rightarrow\Het^2(X;\GG_m)
  \right)
\end{displaymath}
on artinian~$k$-algebras~$A$ with residue field~$k$ is (pro-)representable
and formally smooth. This is referred to as the {\it formal Brauer
  group}~$\fBr_X$ of~$X$. Its dimension is~1; in fact
\begin{equation}\label{eq:LieBrauer}
	\Lie(\fBr_X)\cong\Het^2(X;\GG_a)\cong\H^2(X,\O_X)
\end{equation}
gives the Lie algebra. These formal groups are the reason for the
topologists' interest in~K3 surfaces.

As a remark, the results of Artin and Mazur are general enough to show
that every Calabi-Yau~$n$-fold gives rise to a 1-dimensional formal
group, using the same construction based on~$\Het^n(X;\GG_m)$.

\subsection{Heights}\label{subsec:Fermat}

Over rings of prime characteristic, formal groups have an associated
height, which can be a positive integer or infinite. It counts the
maximal number of Frobenius morphisms over which multiplication by~$p$
factors. It is the most basic invariant of formal groups in prime
characteristic, and over separably closed fields it is in fact their
only invariant. See~\cite{Lazard:Classification} and~\cite{Fröhlich},~III.2. 
Over local rings, the height is dominated by the height over
the residue field.
 
The multiplicative formal group~$\hat\GG_m$ has height~1, whereas the
additive formal group~$\hat\GG_a$ has infinite height. The height of
the formal Picard group of an elliptic curve is either 1 or 2, in
which case the elliptic curve is called ordinary or supersingular,
respectively.

\begin{exam*}
  It is known that the height of the formal Brauer group of the Fermat
  surface is 1 in the case when~$p\equiv 1$ modulo~$4$ and infinite
  if~$p\equiv 3$ modulo~$4$, see page 91 in~\cite{Artin+Mazur} and
  page 543 in~\cite{Artin}, respectively. The inclined reader may want
  to see this by means of Stienstra's formula
  \begin{displaymath}
    \log'(T)=\sum_{n=0}^\infty\frac{(4n)!}{(n!)^4}T^{4n}
  \end{displaymath}
  for the derivative of the logarithm~\cite{Stienstra:fgl}. This example
  shows that the height need not vary nicely from prime to prime in a
  family of mixed characteristic.
\end{exam*}

\begin{exam*}
  The height of formal Brauer groups of Kummer surfaces are 1, 2 or
  infinite, see~\cite{vanderGeerKatsura:Heights}. Specifically, to get
  height 2, take the Kummer surface of a product of an ordinary and a
  supersingular elliptic curve.
\end{exam*}

In general, the height of the formal Brauer group of a~K3 surface will
be at most 10 or infinite, and all these possibilities actually
occur. There are explicit examples known for all heights except for 7,
see~\cite{Yui} and~\cite{Goto}. A~K3 surface is called {\it ordinary}
or {\it supersingular} (in the sense of Artin, see~\cite{Artin}) if
the height is 1 or infinite, respectively. 

The pattern continues: there is a bound on the height of the
Artin-Mazur formal group associated to a Calabi-Yau~$n$-fold,
see~\cite{vanderGeerKatsura:Heights} again, but note that the bound
given there is not sharp even in the case of~K3 surfaces.

Now let~$p$ be a prime that does not divide~$2d$, and let
\begin{displaymath}
	\M_{2d,p}
\end{displaymath}
be the base change of~$\M_{2d}$ from~$\ZZ[1/2d]$ to~$\ZZ/p$. For all
$h\geqslant 1$, there is a closed substack
\begin{displaymath}
	\M_{2d,p,h} 
\end{displaymath}
of~$\M_{2d,p}$ defined by those~K3 surfaces which have a formal Brauer
group of height at least~$h$. These define the {\it height
  stratification}
\begin{equation}\label{height stratification}
  \M_{2d,p}=\M_{2d,p,1}
  \supseteq\M_{2d,p,2}
  \supseteq\M_{2d,p,3}\supseteq\dots
\end{equation}
of~$\M_{2d,p}$, see~\cite{vanderGeerKatsura:Stratification}
and~\cite{Ogus:HeightStrata}. By what has been said above, the chain
stabilises at~$h=11$, with~$\M_{2d,p,11}$ being the supersingular
locus, at least set-theoretically. Its open complement
\begin{displaymath}
  \Mo_{2d,p,h} 
\end{displaymath}
in~$\M_{2d,p,h}$ is the moduli stack of polarised~K3 surfaces of
finite height at least~$h$ in characteristic~$p$. These are known to
be smooth of dimension~$20-h$ for~\hbox{$h=1,\dots,10$}, and empty
for~$h=11$. The substack~$\Mo_{2d,p,h+1}$ of~$\Mo_{2d,p,h}$ is defined
by the vanishing of a section of an invertible sheaf of ideals, see
again~\cite{vanderGeerKatsura:Stratification}
and~\cite{Ogus:HeightStrata}. It follows that these sections define a
regular sequence locally on the moduli stack.


\section{K3 spectra}

The aim of this section is to define the notion of a~K3 spectrum in
analogy with elliptic spectra.

\subsection{Even periodic ring spectra}
 
Let~$E$ be a ring spectrum in the weak ``up to homotopy'' sense. In
other words~$E$ is just a monoid with respect to the smash product in
the homotopy category of spectra. Nowadays, there are various models
for the category of spectra which come with a well-behaved smash
product before passage to the homotopy category, so that it also makes
sense to study monoids in a category of spectra
itself. See~\cite{MMSS} for a comparison of some of the most common
models. In this writing, the stronger notion will not be discussed.

The ring spectrum~$E$ is called {\it even} if its homotopy groups are
trivial in odd degrees, and {\it periodic} if the multiplication
induces isomorphisms
\begin{displaymath}
  \pi_{2m}E\otimes_{\displaystyle \pi_0E}\pi_{2n}E
  \stackrel{\cong}{\longrightarrow}
  \pi_{2(m+n)}E
\end{displaymath}
for all integers~$m$ and~$n$. An even periodic ring spectrum has an
associated formal group~$\Gamma_E$, represented by the ring
$\pi_0E^{\B\TT}=E^0\B\TT$. This ring will be interpreted as the ring
of functions on the formal group over~$\pi_0E$.

The projection from~$\B\TT$ to a point induces the structure map
$\pi_0E\rightarrow\pi_0E^{\B\TT}$ of the~$\pi_0E$-algebra. The
unit~\hbox{$*=\B1\rightarrow\B\TT$} of the group~$\TT$ induces a
co-unit~\hbox{$\pi_0E^{\B\TT}\rightarrow\pi_0E$}. This is to be
interpreted as the evaluation map at the origin. Its kernel~$I$ is the
ideal of functions vanishing at the origin. The cotangent
space~\hbox{$I/I^2$} at the origin can then be identified with
$\pi_2E$, so that its dual~$\pi_{-2}E$ is the Lie algebra
\begin{equation}\label{eq:LieGamma}
  \Lie(\Gamma_E)\cong\pi_{-2}E
\end{equation}
of~$\Gamma_E$.

\subsection{Definition of~K3 spectra}

Recall, or see~\cite{Hopkins:ICM1994} for example, that an elliptic
spectrum is a triple~$(E,X,\phi)$ consisting of an even periodic ring
spectrum~$E$, an elliptic curve~$X$ over~$\pi_0E$, and an
isomorphism~$\phi$ of the formal Picard group of~$X$ with the formal
group~$\Gamma_E$ associated to~$E$ over~$\pi_0E$. There may be reasons
to allow~$X$ to be some sort of generalised elliptic curve, but these
will not matter here.

\begin{defi*} 
  A {\it~K3 spectrum} is a triple~$(E,X,\phi)$ consisting of an even
  periodic ring spectrum~$E$, a~K3 surface~$X$ over~$\pi_0E$, and an
  isomorphism~$\phi$ of the formal Brauer group of~$X$ with the formal
  group~$\Gamma_E$ of~$E$.
\end{defi*}

The purpose of the rest of this text is to show how one obtains
examples of~K3 spectra.

\subsection{Examples of~K3 spectra}\label{subsec:rational}

Let~$X$ be a~K3 surface over a field of characteristic 0, so that the
formal Brauer group of the~K3 surface~$X$ is automatically
additive. By~\eqref{eq:LieBrauer}, its Lie algebra is~\hbox{$\Lie(\fBr_X)\cong\H^2(X;\GG_m)$}, and the usual logarithm
$\GG_m\cong\GG_a$ induces an
isomorphism~\hbox{$\H^2(X;\GG_m)\cong\H^2(X;\GG_a)$}. The latter group is
just~$\H^2(X;\O_X)$ which calculates the Lie algebra of~$\fBr_X$:
\begin{displaymath}
	\Lie(\fBr_X)\cong\H^2(X;\O_X).
\end{displaymath}
Let~$E$ be the Eilenberg-MacLane spectrum for the (graded) canonical
ring of~$X$, so that
\begin{displaymath}
	\pi_{2n}E=\H^0(X;\omega_X^{\otimes n}).
\end{displaymath}
As has been noted above, see~\eqref{eq:LieGamma}, the Lie algebra of
$\Gamma_E$ is~$\pi_{-2}E$, which is the dual of~$\pi_2E$:
\begin{displaymath}
	\Lie(\Gamma_E)\cong\H^0(X;\omega_X)^{\vee}.
\end{displaymath}
Let~$\phi$ be the unique isomorphism between~$\fBr_X$ and~$\Gamma_E$
such that the induced isomorphism on Lie algebras is Serre duality
\begin{displaymath}
	\H^2(X;\O_X)\cong\H^0(X;\omega_X)^{\vee}.
\end{displaymath}
Then~$(E,X,\phi)$ is a~K3 spectrum.

The next section will explain how to obtain examples in non-trivial
characteristic.


\section{Landweber exactness}

The aim of this section is to show that there are ``enough''~K3
spectra. Their existence will be a consequence of the regularity of
the height stratification and Landweber's exact functor theorem, as
suggested in~\cite{Morava}. However, in view of later applications to
highly structured~K3 spectra, extra care will be taken to work over
torsion-free rings instead of rings of characteristic~$p$.

\subsection{The exact functor theorem}

Let us first recall Landweber's theorem
from~\cite{Landweber:Exactness}. See also~\cite{Miller:Landweber}.

Let~$\Gamma$ be a formal group over a local ring~$R$. Choose a
co-ordinate~$T$ and let
\begin{displaymath}
	[p](T)=a_0T+a_1T^2+\dots+a_{p-1}T^p+a_pT^{p+1}+\dots
\end{displaymath}
be the~$p$-series of~$\Gamma$ with respect to that co-ordinate. For an
integer~$n\geqslant0$ let~$I_{p,n}$ be the ideal generated by the
first~$p^{n-1}$ coefficients. It is known that this does not depend
on the co-ordinate. There results an increasing sequence
\begin{equation}\label{eq:Landweber}
	0=I_{p,0}\subseteq I_{p,1}\subseteq I_{p,2}\subseteq\dots
\end{equation}
of ideals. Note that~$I_{p,1}$ is the ideal~$(p)$ generated by
$a_0=p$. For~$n\geqslant 1$, the surjection~\hbox{$R/p\rightarrow
  R/I_{p,n}$} corresponds to the closed subscheme of~$\Spec(R/p)$
where the height of~$\Gamma$ is at least~$n$. It is also known that
there is a sequence~$(v_n\:|\:n\geqslant0)$ of elements in~$R$ such
that~$I_{p,n+1}$ is generated by~$I_{n,p}$ and~$v_n$. The formal
group~$\Gamma$ is called {\it regular at~$p$} if
$(v_n\:|\:n\geqslant0)$ is a regular sequence in~$R$. This does not
depend on the choice of the~$v_n$. For example, if~$p$ is invertible
in~$R$, then~$\Gamma$ is automatically~$p$-regular.

The graded formal group law over the graded ring~$R[u^{\pm1}]$ which
is defined by~$\Gamma$ is classified by a graded
morphism~\hbox{$\mathrm{MU}_*\rightarrow R[u^{\pm1}]$}.  Landweber's
theorem states that the functor
\begin{displaymath}
	X\mapsto \mathrm{MU}_*X\otimes_{\mathrm{MU}_*}^\Gamma R[u^{\pm1}]
\end{displaymath}
is a homology theory if~$\Gamma$ is~$p$-regular for all primes~$p$ and
the sequence~(\ref{eq:Landweber}) eventually stabilises. This homology
theory is representable by an even periodic ring spectrum~$E$ which
has~\hbox{$\pi_0E\cong R$} and~\hbox{$\Gamma_E\cong\Gamma$}.

\subsection{The statement}

As before, let us fix an integer~$d\geqslant 1$, and consider the
moduli stack~$\M_{2d}$ of polarized~K3 surfaces of degree~$2d$ over
$\ZZ[1/2d]$. Suppose that~$X$ is a~K3 surface over an affine scheme
$\Spec(R)$ on which~$2d$ is invertible. Then~$X$ is classified by a
map
\begin{displaymath}
	X\colon\Spec(R)\longrightarrow\M_{2d}.
\end{displaymath}
The following theorem will assert that -- under certain conditions --
the associated formal Brauer group is Landweber exact. Before giving
the precise statement, let me motivate the choice of hypotheses.

First of all, flatness of the map classifying~$X$ will be
indispensable for the argument. This will imply that~$R$ is flat over
$\ZZ[1/2d]$, so that~$R$ is torsion-free, as required by Landweber's
theorem. The primes which divide~$2d$ are automatically units
in~$R$. If~$R$ is a~$\QQ$-algebra, there is no need to invoke
Landweber's theorem to get examples, see the previous
Section~\ref{subsec:rational}. On the other hand, the height
stratification does not vary nicely from prime to prime, see
Section~\ref{subsec:Fermat}. Therefore, we will concentrate on one
prime and assume that the ring~$R$ is a local~$\ZZ_{(p)}$-algebra for
some prime~$p$ which does not divide~$2d$. Recall that ``local'' means
that~$R$ has a unique maximal ideal~$m$, and that this maximal ideal
contains~$p$, so that the residue characteristic of~$R$ is~$p$. There
may be other settings which make the following argument work, but this
one has its merits, as will hopefully become clear in due course.

\begin{theo}\label{thm:main}
  Let~$R$ be a noetherian local~$\ZZ_{(p)}$-algebra for some 
  prime~$p$ which does not divide~$2d$. Let~$X$ be a polarised~K3 surface of degree~$2d$ over~$R$ such that the height of 
  the closed fibre is finite. If the map
  \begin{equation}\label{eq:classifying X}
    X\colon\Spec(R)\longrightarrow\M_{2d}
  \end{equation}
  classifying~$X$ is flat, then the formal Brauer group~$\fBr_X$ 
  is Landweber exact, so that there is an even periodic ring 
  spectrum~$E$ with~$\pi_0E\cong R$ and~\hbox{$\Gamma_E\cong\fBr_X$}.
\end{theo}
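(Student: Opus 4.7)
The plan is to verify the hypotheses of Landweber's exact functor theorem recalled in Section~5.1 for the formal Brauer group $\fBr_X$ as a formal group over $R$, and then to read off the spectrum $E$ directly from that theorem. Concretely, for each prime $q$ one must exhibit the sequence $(v_n : n\geqslant0)$ attached to the $q$-series of $\fBr_X$ as a regular sequence in $R$, and show that the chain of ideals $(I_{q,n})$ eventually stabilises. For primes $q\neq p$, the fact that $R$ is local with residue characteristic $p$ makes $q$ a unit in $R$, so $I_{q,1}=(q)=R$ and the criterion holds trivially. All the work is for $q=p$.

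First I would observe that $p$ is a non-zero-divisor in $R$: by flatness of the classifying map together with smoothness of $\M_{2d}$ over $\ZZ[1/2d]$, the ring $R$ is flat over $\ZZ[1/2d]$, hence $p$-torsion free. Next I would work modulo $p$ and invoke the height stratification~(\ref{height stratification}). Each inclusion $\M_{2d,p,h+1}\hookrightarrow\M_{2d,p,h}$ is cut out by a section of an invertible sheaf of ideals and each open stratum $\Mo_{2d,p,h}$ is smooth of dimension $20-h$, so, etale-locally on $\M_{2d,p}$ near the image of $X$, these defining sections form a regular sequence $w_1,\dots,w_{10}$. By the characterisation of height via the $p$-series of the universal Artin-Mazur formal Brauer group, the Landweber elements $v_h$ at each level generate the same ideals as the stratification sections $w_h$, up to units after an etale base change. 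Flatness of $\Spec(R)\to\M_{2d}$ then ensures that the pullback of this regular sequence remains regular in $R/p$, so that $p,v_1,\dots,v_{10}$ is a regular sequence in $R$.

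Finally, the hypothesis that the closed fibre has finite height, say $h_0\leqslant 10$, says that the image of $\Spec R$ lies in $\Mo_{2d,p,h_0}$, hence $v_{h_0}\notin m$. Since $R$ is local, $v_{h_0}$ is a unit in $R$, which forces $I_{p,h_0+1}=R$ and stabilisation of the chain~(\ref{eq:Landweber}). The main obstacle is the bridge that I used in the middle paragraph: identifying, up to units after a suitable etale localisation on the moduli stack, the sections of~\cite{vanderGeerKatsura:Stratification} and~\cite{Ogus:HeightStrata} that cut out the height strata with the Landweber generators $v_h$ of the ideals coming from the $p$-series of the universal formal Brauer group. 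Once that dictionary is in place, everything else is formal from flatness of the classifying map and the local structure of $R$.
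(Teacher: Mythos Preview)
Your plan is correct and matches the paper's proof almost step for step: reduce to $q=p$, use flatness over $\ZZ[1/2d]$ for $p$-regularity, base-change modulo $p$, pull back the regular sequence defining the height stratification along the flat map, and use that the closed fibre has finite height $h_0$ to make $v_{h_0}$ a unit (the paper phrases this last step via Nakayama: the closed locus $\{v_{h_0}=0\}$ in $\Spec(R/p)$ misses the closed point, hence is empty). One small slip: it is $\Spec(R/p)$, not $\Spec(R)$, that maps to $\M_{2d,p}$, and its image need not lie in $\Mo_{2d,p,h_0}$ --- only the closed point does, and the relevant fact is that this closed point avoids $\M_{2d,p,h_0+1}$; also, the ``bridge'' you flag (identifying the stratification ideals with the Landweber $I_{p,h}$) is exactly what the paper asserts without further comment when it writes that $I_{p,h}$ is the preimage of $J_h$, both being the locus of height $\geqslant h$.
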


Examples will be given after the proof, showing how this can be
used to show that there are ``enough''~K3 spectra.

\subsection{The proof}

By assumption on the ring~$R$, all primes different from~$p$ are
invertible, so that the formal group will automatically be~$q$-regular
for all~$q\not=p$. It remains to show that it is~$p$-regular as well.

\begin{lemm}\label{lem:p}
  The prime~$p$ is a nonzerodivisor on~$R$.
\end{lemm}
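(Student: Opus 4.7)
The plan is to deduce that $p$ is a nonzerodivisor on $R$ purely from flatness considerations, composing the flatness hypothesis on the classifying map with the known smoothness of the moduli stack.

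First I would recall that, as stated in Section~\ref{subsec:moduli}, the moduli stack $\M_{2d}$ is smooth of relative dimension~$19$ over $\Spec(\ZZ[1/2d])$. Since $p$ does not divide~$2d$, the ring $\ZZ_{(p)}$ is a $\ZZ[1/2d]$-algebra, and the structure map $\M_{2d}\to\Spec(\ZZ[1/2d])$ is in particular flat. The map $X\colon\Spec(R)\to\M_{2d}$ classifying our K3 surface, being a map of $\ZZ_{(p)}$-objects, together with flatness of the structure map of $\M_{2d}$, exhibits the structural morphism $\Spec(R)\to\Spec(\ZZ_{(p)})$ as the composition of two flat morphisms.

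Next I would invoke the hypothesis that the classifying map~\eqref{eq:classifying X} is flat. Composition of flat morphisms is flat, so the composite
\begin{displaymath}
  \Spec(R)\longrightarrow\M_{2d}\longrightarrow\Spec(\ZZ_{(p)})
\end{displaymath}
is flat. In other words, $R$ is a flat $\ZZ_{(p)}$-algebra.

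Finally, I would conclude by noting that $p$ is a nonzerodivisor on $\ZZ_{(p)}$, and flatness of $R$ over $\ZZ_{(p)}$ preserves injectivity when tensoring: the short exact sequence $0\to\ZZ_{(p)}\xrightarrow{p}\ZZ_{(p)}$ remains exact after applying $-\otimes_{\ZZ_{(p)}}R$, which says precisely that multiplication by $p$ on $R$ is injective. There is no real obstacle to this argument; the only subtlety is to note that we are using flatness of $\M_{2d}$ over $\ZZ_{(p)}$ (coming from smoothness over $\ZZ[1/2d]$ and the hypothesis $p\nmid 2d$) rather than any deeper property of the height stratification, which will enter only in the subsequent steps of the proof of Theorem~\ref{thm:main}.
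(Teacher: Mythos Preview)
Your proof is correct and follows essentially the same approach as the paper: the paper's proof simply notes that the flatness of the classifying map, composed with the smoothness of $\M_{2d}$ over $\ZZ[1/2d]$, makes $R$ flat over $\ZZ[1/2d]$ and hence torsion-free. Your version routes the final step through $\ZZ_{(p)}$ rather than $\ZZ[1/2d]$, which is a cosmetic difference; the only tiny point worth tightening is that the composite literally lands in $\Spec(\ZZ[1/2d])$, and flatness over $\ZZ_{(p)}$ then follows because $R$ is already a $\ZZ_{(p)}$-algebra and $\ZZ_{(p)}$ is a localization of $\ZZ[1/2d]$.
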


\begin{proof}
  As has already been remarked before, this follows from the flatness
  of~$R$ as an algebra over~$\ZZ[1/2d]$.
\end{proof}

Let us now reduce everything modulo~$p$. The reduction~$X/p$ of~$X$
modulo~$p$ is classified by a morphism
\begin{equation}\label{eq:classifying X/p}
	X/p\colon\Spec(R/p)\longrightarrow\M_{2d,p}.
\end{equation}

\begin{lemm}
  If the map~\eqref{eq:classifying X} classifying~$X$ is flat, so is
  the map~\eqref{eq:classifying X/p} classifying~$X/p$.
\end{lemm}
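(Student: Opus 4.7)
The plan is to identify the classifying morphism $X/p$ as the base change of the classifying morphism $X$ along the faithfully flat quotient $\ZZ[1/2d]\to\ZZ/p$ (recall that $p$ does not divide $2d$, so $\ZZ/p$ really is a $\ZZ[1/2d]$-algebra), and then to invoke the standard fact that flatness is preserved under arbitrary base change.

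First I would observe that, by the very definition recalled in Section~\ref{subsec:moduli}, the stack $\M_{2d,p}$ is $\M_{2d}\times_{\Spec(\ZZ[1/2d])}\Spec(\ZZ/p)$. On the affine side, since $2d$ is a unit in $R$, one has
\begin{displaymath}
R/p = R\otimes_{\ZZ}\ZZ/p = R\otimes_{\ZZ[1/2d]}\ZZ/p,
\end{displaymath}
so that $\Spec(R/p)=\Spec(R)\times_{\Spec(\ZZ[1/2d])}\Spec(\ZZ/p)$. It follows formally that the diagram
\begin{displaymath}
\xymatrix{
\Spec(R/p)\ar[r]\ar[d]_{X/p}&\Spec(R)\ar[d]^{X}\\
\M_{2d,p}\ar[r]&\M_{2d}
}
\end{displaymath}
is cartesian: both squares
\begin{displaymath}
\Spec(R/p)\to\Spec(R)\to\Spec(\ZZ[1/2d])\quad\text{and}\quad\M_{2d,p}\to\M_{2d}\to\Spec(\ZZ[1/2d])
\end{displaymath}
arise by pulling back along $\Spec(\ZZ/p)\to\Spec(\ZZ[1/2d])$, and $X$ is a morphism over $\Spec(\ZZ[1/2d])$.

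Once the cartesian square has been set up, the conclusion is immediate: flatness of a morphism of Deligne--Mumford stacks is stable under base change, so the flatness of $X$ implies the flatness of $X/p$. The only technical point that requires a moment of thought is that the base change here is performed over a stack rather than over a scheme, but this causes no trouble because $\M_{2d}$ is a separated Deligne--Mumford stack of finite type and flatness can be checked after a smooth atlas, which commutes with the base change along $\ZZ[1/2d]\to\ZZ/p$. In short, there is no genuine obstacle; the content of the lemma is essentially the compatibility of the two base changes, which follows from the construction of $\M_{2d,p}$ as the mod $p$ reduction of $\M_{2d}$.
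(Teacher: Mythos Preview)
Your proof is correct and follows essentially the same route as the paper: both arguments identify the square with $X/p$ and $X$ as cartesian by recognising each horizontal arrow as the base change along $\Spec(\ZZ/p)\to\Spec(\ZZ[1/2d])$, and then invoke stability of flatness under base change. The paper phrases this via the two-out-of-three property for pullback squares in a stacked diagram, whereas you argue directly, but the content is identical.
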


\begin{proof}
	Consider the following diagram.
   \begin{center}
  	\mbox{ 
	\xymatrix{
		\Spec(R/p)\ar[d]_{X/p}\ar[r] & \Spec(R)\ar[d]^X\\		
			\M_{2d,p}\ar[d]\ar[r] & \M_{2d}\ar[d]\\
		\Spec(\ZZ/p)\ar[r] & \Spec(\ZZ[1/2d])
    	} 
  }
	\end{center}
        The bottom square is a pullback by the definition
        of~$\M_{2d,p}$. The outer rectangle is a pullback by
        elementary algebra. It follows that the upper square is a
        pullback as well. The result follows by base change for flat
        morphisms.
\end{proof}

Recall from~\ref{subsec:Fermat} that the finite height
part~$\Mo_{2d,p}=\Mo_{2d,p,1}$ denotes the complement of the
supersingular locus~$\M_{2d,p,11}$ in the moduli
stack~$\M_{2d,p}=\M_{2d,p,1}$.

\begin{lemm}
  The map~\eqref{eq:classifying X/p} classifying~$X/p$ factors over
  the open substack~$\Mo_{2d,p}$.
\end{lemm}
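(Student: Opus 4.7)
The plan is to show that the pullback of the closed substack $\M_{2d,p,11}\subseteq\M_{2d,p}$ along the classifying map $X/p$ is empty; this is precisely the condition for $X/p$ to factor through the open complement $\Mo_{2d,p}$. Since closed immersions are preserved under arbitrary base change, this pullback is a closed subscheme $V(J)\subseteq\Spec(R/p)$ for some ideal $J\subseteq R/p$, and the task reduces to showing $J=R/p$.

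The main observation is that $R/p$ inherits locality from $R$. Because $R$ is a local $\ZZ_{(p)}$-algebra whose residue field has characteristic $p$, the element $p$ lies in the maximal ideal $m$ of $R$, so $R/p$ is again a noetherian local ring, with unique maximal ideal $m/p$. Consequently $\Spec(R/p)$ has a unique closed point, and every nonempty closed subset of it must contain that point.

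It therefore suffices to verify that the closed point of $\Spec(R/p)$ is not contained in $V(J)$. Under the natural identification the closed point of $\Spec(R/p)$ is the closed point of $\Spec(R)$, and its image along $X/p$ classifies precisely the closed fibre of $X$ over the residue field. By the standing hypothesis of Theorem~\ref{thm:main}, this closed fibre has finite height, and hence its classifying map avoids the supersingular locus $\M_{2d,p,11}$. So the closed point is not in $V(J)$, and the locality argument then forces $V(J)=\emptyset$, as desired.

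I do not expect any serious obstacle here: the argument is almost entirely formal, combining stability of closed immersions under base change with the elementary fact that a closed subset of the spectrum of a local ring is either empty or contains the unique closed point. In particular, the flatness of $X/p$ established in the preceding lemma plays no role in this step; the finite-height assumption at the closed fibre alone propagates to the whole of $\Spec(R/p)$ simply because $R/p$ is local.
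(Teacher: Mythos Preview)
Your argument is correct and is essentially the same as the paper's: the paper's proof is a terse one-liner stating that the height of the closed fibre bounds the height of $X/p$, which is exactly the semicontinuity-plus-locality argument you have spelled out in detail. Your explicit formulation via the closed subset $V(J)\subseteq\Spec(R/p)$ is in fact the very ``geometric interpretation of Nakayama's Lemma'' that the paper invokes in the proof of the \emph{next} lemma.
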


\begin{proof}
  By assumption, the height of the closed fibre~$X/m$ of~$X/p$ is
  finite, and this height bounds the height of~$X/p$.
\end{proof}

Let us now see how the height stratification~\eqref{height
  stratification} from Section~\ref{subsec:Fermat} manifests in this
context. The pullbacks of the ideal sheaves which cut out the height
strata~$\M_{2d,p,h}$ in~$\M_{2d,p}=\M_{2d,p,1}$ along the
morphism~\eqref{eq:classifying X/p} give rise to a sequence of ideals
\begin{displaymath}
	0=J_1\subseteq J_2\subseteq J_3\subseteq\dots\subseteq R/p,
\end{displaymath}
such that~$I_{p,h}$ is the pre-image of~$J_h$ along~$R\rightarrow R/p$.

\begin{lemm}
	If the height of the closed fibre is~$h$, one has 
	$J_h\not=R/p$ and~\hbox{$J_{h+1}=R/p$},
	as well as
	$I_{p,h}\not=R$ and~$I_{p,h+1}=R$.
\end{lemm}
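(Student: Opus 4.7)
The plan is to unpack the geometric meaning of $J_h$ and $J_{h+1}$ at the closed point of $\Spec(R/p)$ and then transfer everything back to $R$ via the defining preimage relation.

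First, I would observe that since $R$ is local with maximal ideal $m$ and $p \in m$, the quotient $R/p$ is again local, with maximal ideal $\bar m = m/p$. Its unique closed point is the image of the closed point of $\Spec(R)$, and the pullback of $X/p$ along this point is precisely the closed fibre $X/m$. A basic fact about local rings then says that an ideal is proper if and only if it is contained in the maximal ideal.

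Next, by the construction of $J_h$ as the pullback of the ideal sheaf cutting out the height-$\geq h$ stratum $\M_{2d,p,h}$, the vanishing locus $V(J_h) \subseteq \Spec(R/p)$ consists of exactly those points at which the formal Brauer group of $X/p$ has height at least $h$. Since the closed fibre has height equal to $h$ by hypothesis, the closed point of $\Spec(R/p)$ lies in $V(J_h)$ (because its height is $\geq h$) but not in $V(J_{h+1})$ (because its height is $< h+1$). Translating: $J_h \subseteq \bar m$, so $J_h \neq R/p$; and $J_{h+1} \not\subseteq \bar m$, so by the local-ring observation $J_{h+1} = R/p$.

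For the companion statements about $I_{p,h}$ and $I_{p,h+1}$, I would use that $I_{p,h}$ is defined as the preimage of $J_h$ under the canonical surjection $R \to R/p$. Hence $I_{p,h} = R$ exactly when $J_h = R/p$, and similarly for index $h+1$; the desired $I_{p,h} \neq R$ and $I_{p,h+1} = R$ then follow immediately from the previous paragraph. There is no substantive obstacle here: once the local-ring dichotomy is noted, the proof is a matter of unwrapping definitions. The only point needing care, and already built into the setup preceding the lemma, is the identification of the Landweber ideal $I_{p,h}$ with the preimage in $R$ of the moduli-theoretic ideal $J_h \subseteq R/p$.
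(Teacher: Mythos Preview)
Your argument is correct and essentially the same as the paper's: both identify $V(J_n)$ with the height-$\geq n$ locus, check whether the closed point lies in it, and reduce the $I$-statements to the $J$-statements via the preimage relation. The only cosmetic difference is that where you use the elementary fact that in a local ring an ideal not contained in the maximal ideal must be the unit ideal, the paper phrases the same step as a ``geometric interpretation of Nakayama's Lemma'' (a closed subset of the spectrum of a noetherian local ring missing the closed point is empty).
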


\begin{proof}
  The results for the~$J$ imply those for the~$I$, so we only need to
  prove the first ones.
	
  The ideal~$J_h$ cuts out the locus in~$\Spec(R/p)$ where the height
  is at least~$h$, and we have to show that this locus is not
  empty. But it contains the closed point.
	
  Similarly, the ideal~$J_{h+1}$ cuts out the locus in~$\Spec(R/p)$
  where the height is at least~\hbox{$h+1$}, and we have to show that
  this locus is empty. But it is closed and does not contain the
  closed point~$\Spec(R/m)$ by assumption. A geometric interpretation
  of Nakayama's Lemma gives the result: a closed subset of a spectrum
  of a noetherian local ring which does not contain the closed point
  is empty.
\end{proof}

The preceding lemma implies that~$v_{h}$ is a unit in~$R$, and, unless
the closed fibre is ordinary, we are left to deal with the
$v_1,\dots,v_{h-1}$.

\begin{lemm}
  The sequence~$(v_1,\dots,v_h)$ is regular on~$R/p$.
\end{lemm}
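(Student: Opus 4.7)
The strategy is to descend the regularity of the height stratification from the moduli stack $\Mo_{2d,p}$ to the ring $R/p$ by flat base change, and then to identify the descended generators with (unit multiples of) the Landweber generators $\bar v_n$. The two preceding lemmas already show that the classifying morphism $\Spec(R/p)\to\M_{2d,p}$ is flat and factors through the open substack $\Mo_{2d,p}$, while Section~\ref{subsec:Fermat} records that, etale-locally on $\Mo_{2d,p}$, each closed substack $\M_{2d,p,n+1}$ is cut out in $\M_{2d,p,n}$ by the vanishing of a section of an invertible ideal sheaf, and that these sections collectively form a regular sequence.

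I would work etale-locally around the image of the closed point of $\Spec(R/p)$, trivialise the invertible sheaves there, and pull back the local sections to elements $w_1,\dots,w_{h-1}\in R/p$ with $J_{n+1}=J_n+(w_n)$. Since flat local base change preserves regular sequences and the preceding lemma guarantees $J_h\neq R/p$, the sequence $(w_1,\dots,w_{h-1})$ is regular on $R/p$. To match it with the $v_n$, note that the definition $I_{p,n+1}=I_{p,n}+(v_n)$ together with the identification $J_n=I_{p,n}/(p)$ gives $J_{n+1}=J_n+(\bar v_n)$, so $\bar v_n$ and $w_n$ both generate the cyclic ideal $J_{n+1}/J_n$ in the local ring $(R/p)/J_n$; they therefore differ by a unit, which lifts to a unit of $R/p$. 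Hence $\bar v_n$ inherits the nonzerodivisor property of $w_n$ on $(R/p)/(\bar v_1,\dots,\bar v_{n-1})$, showing that $(v_1,\dots,v_{h-1})$ is regular on $R/p$. Appending the unit $v_h$ from the preceding lemma then yields the desired regular sequence $(v_1,\dots,v_h)$.

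The main obstacle I expect is the identification of the geometric generators $w_n$, pulled back from the moduli stack, with the arithmetic generators $\bar v_n$ coming from the $p$-series of the formal Brauer group. Both cut out the ``height $\geq n+1$'' locus -- by construction for the $w_n$ and by classical formal-group theory for the $\bar v_n$ -- so the two principal ideals coincide up to units, but it is precisely this compatibility that allows one to transfer the moduli-theoretic regular sequence to the specific sequence $(v_1,\dots,v_h)$ rather than to an abstract one.
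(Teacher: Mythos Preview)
Your approach is essentially the paper's: pull back the locally regular sequence cutting out the height strata on $\Mo_{2d,p}$ along the flat morphism $\Spec(R/p)\to\Mo_{2d,p}$, and use that flat base change preserves regular sequences. The paper's proof is two sentences and stops there.

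Where you go further is in explicitly matching the pulled-back geometric generators $w_n$ with the Landweber generators $\bar v_n$. The paper sidesteps this by having already declared, just before the lemma, that the pullback ideals $J_n$ satisfy $I_{p,n}=\text{preimage of }J_n$ in $R$; this is exactly the statement that the height stratification on the moduli stack (defined via the formal Brauer group of the universal family) pulls back to the Landweber filtration of $R/p$, so no separate identification is needed. Your argument that $\bar v_n$ and $w_n$ differ by a unit of the local ring $(R/p)/J_n$, which lifts to a unit of $R/p$, is correct and makes this step honest rather than asserted. In effect you have unpacked what the paper takes for granted; the underlying idea is identical.
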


\begin{proof}
  This requires the more delicate results about the height stratification
  described in Section~\ref{subsec:Fermat}: the height
  stratification on~$\M_{2d,p}$ is defined locally by a regular
  sequence of sections of line bundles on the moduli stack. As flat
  morphisms preserve regularity, these sections pull back to a regular
  sequence on~$R/p$.
\end{proof}

As~$p$ is a nonzerodivisor on~$R$ by Lemma~\ref{lem:p}, it follows that 
the sequence
\begin{displaymath}
	(p,v_1,\dots,v_h)
\end{displaymath}
is regular on~$R$, with~$v_h$ a unit. This finishes the proof of
Theorem~\ref{thm:main}.

There are weaker versions of Theorem~\ref{thm:main} based on
corresponding versions of Landweber's result
modulo~$p$. (See~\cite{yagita} and~\cite{yosimura} for the latter.)
However, in order to impose ``brave new rings'' structures on the
resulting spectra, it is desirable to work with torsion-free
coefficient rings. The extra effort it took to achieve this will
pay off in the extra examples encompassed, to which we turn now.

\subsection{Examples}\label{subsec:final examples}

The rest of this section serves the purpose to show that all geometric
points of~$\M_{2d}$ of finite height can be thickened to give rise to
K3 spectra by means of the preceding Theorem~\ref{thm:main}. The
problem does not lie so much in finding a lifting to characteristic 0,
as there always is a lifting to the Witt ring, for example,
see~\cite{Ogus:Crystals},~\cite{Deligne:Relevements}. It lies in
finding such a lift with a flat map to the moduli stack. However, the
algebraicity of the stack provides such, as will now be explained.

For every degree~$2d$, there is a smooth surjection
\begin{displaymath}
	H\longrightarrow\M_{2d}
\end{displaymath}
where~$H$ is a suitable piece of a Hilbert scheme, 
see~\cite{Rizov:MixedCharacteristic} for example. Let~$R$ be one of the 
local rings of~$H$. As~$\M_{2d}$ has finite type over~$\ZZ[1/2d]$,
this will be noetherian. And if the residue field~$k$ of~$R$ has
characteristic prime to~$2d$, the ring~$R$ will be local
over~$\ZZ_{(p)}$. The composition
\begin{displaymath}
	\Spec(R)\longrightarrow H\longrightarrow\M_{2d}
\end{displaymath}
is flat as a composition of flat maps, and classifies a~K3 surface~$X$
over~$R$: the pullback of the universal family over the Hilbert
scheme. If the closed fibre of~$X$ over~$k$ has finite height,
Theorem~\ref{thm:main} applies to give an even periodic ring
spectrum~$E$ such that~\hbox{$\pi_0E\cong R$}
and~\hbox{$\Gamma_E\cong\fBr_X$}.

\begin{prop}
  Let~$R$ be one of the local rings of the Hilbert scheme covering~$
  \M_{2d}$ with residue field of characteristic prime to~$2d$. Then
  there is an even periodic ring spectrum~$E$ such
  that~\hbox{$\pi_0E\cong R$} and~\hbox{$\Gamma_E$} is isomorphic to
  the formal Brauer group of the germ of the universal family over~$R$.
\end{prop}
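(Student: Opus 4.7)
The plan is to package the paragraph immediately preceding the proposition as a direct application of Theorem~\ref{thm:main}. I would first verify the standing hypotheses on $R$: it is noetherian (because the Hilbert-scheme piece $H$ is of finite type over $\ZZ[1/2d]$, and local rings of noetherian schemes are noetherian), it is local by construction, and it is a $\ZZ_{(p)}$-algebra for $p$ equal to the residue characteristic, which is prime to $2d$ by assumption. The case of residue characteristic $0$ is slightly degenerate: every prime becomes a unit in $R$, so Landweber's regularity conditions are vacuous, and a spectrum with the required properties can be produced directly from Landweber's theorem without invoking Theorem~\ref{thm:main} at all.

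I would then verify that the classifying map $\Spec(R)\to\M_{2d}$ is flat. This map factors as $\Spec(R)\to H\to\M_{2d}$, where the first arrow is a localisation (hence flat) and the second arrow is smooth by the choice of $H$ as a smooth cover (hence flat); compositions of flat maps are flat. The K3 surface $X$ over $R$ is then extracted as the pullback to $\Spec(R)$ of the universal family over $H$, and by construction it is classified by this composition.

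At this point, provided the closed fibre of $X$ has finite height, Theorem~\ref{thm:main} applies verbatim and yields the desired even periodic ring spectrum $E$ with $\pi_0 E\cong R$ and $\Gamma_E\cong\fBr_X$. The only genuine content is the existence of a smooth surjection $H\to\M_{2d}$ from a noetherian scheme of finite type over $\ZZ[1/2d]$, which is a consequence of the algebraicity of $\M_{2d}$ as a Deligne-Mumford stack and is referenced to~\cite{Rizov:MixedCharacteristic}. The finite-height condition on the closed fibre is an implicit carryover from the preceding discussion; without it, the supersingular stratum would require separate treatment (e.g., by weaker mod-$p$ variants of Landweber's theorem), and is the only genuine obstacle to a fully uniform statement.
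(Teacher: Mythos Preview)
Your proposal is correct and follows essentially the same route as the paper: the proposition has no separate proof environment, and the preceding paragraph \emph{is} the argument, which you have accurately reconstructed (noetherianity from finite type, $\ZZ_{(p)}$-locality from the residue characteristic, flatness from the factorisation $\Spec(R)\to H\to\M_{2d}$, then Theorem~\ref{thm:main}). You are also right that the finite-height hypothesis on the closed fibre is left implicit in the statement and is carried over from the surrounding discussion; your remark about the residue-characteristic-$0$ case is a small extra bit of care that the paper does not spell out.
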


The reader may wonder why a smooth cover has been used in the
preceding discussion, while an etale cover is available for the
Deligne-Mumford stack~$\M_{2d}$. The reason is that the smooth cover
can be made fairly concrete using the Hilbert schemes above, whereas
the exis\-tence of an etale cover is only guaranteed by means of an
unramified diagonal, which implies for abstract reasons the existence
of etale slices for the smooth cover, see (4.21) in~\cite{DM} or (8.1)
in~\cite{LM-B}. This is in contrast to the case of elliptic curves,
where etale covers can be written down explicitly.

\begin{prop}\label{prop:local}
    If~$X\colon\Spec(k)\rightarrow\M_{2d}$ is a geometric point of
    finite height and characteristic prime to~$2d$, there is an even
    periodic ring spectrum~$E$ such that~\hbox{$\pi_0E$} is isomorphic
    to the local ring of~$\M_{2d}$ in~$X$ (with respect to the etale
    topology) and the reduction of~\hbox{$\Gamma_E$} to~$k$ is the
    formal Brauer group of~$X$.
\end{prop}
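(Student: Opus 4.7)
\emph{Proof proposal.} The plan is to reduce to Theorem~\ref{thm:main} by replacing the smooth Hilbert cover $H\to\M_{2d}$ of the preceding proposition with a local etale slice through the geometric point~$X$. Choose a point $\tilde X\in H$ lying over the image of~$X$; the slice theorem discussed just above the statement (\cite{DM}~(4.21), \cite{LM-B}~(8.1)) produces a locally closed subscheme of~$H$ through~$\tilde X$ whose restriction $U\to\M_{2d}$ is etale. Let $R$ denote the etale local ring of $\M_{2d}$ at~$X$, realised concretely as the (strict) henselisation of the local ring of $U$ at $\tilde X$. Then $R$ is a noetherian local ring with residue field~$k$, and since the residue characteristic is prime to~$2d$, it is naturally a $\ZZ_{(p)}$-algebra.

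The composition $\Spec(R)\to U\to\M_{2d}$ is flat, being a composition of the flat morphism $\Spec(R)\to U$ with the etale morphism $U\to\M_{2d}$. Pulling back the universal polarised~K3 along this composition yields a polarised~K3 surface~$X_R$ of degree~$2d$ over~$R$ whose closed fibre recovers~$X$, and is therefore of finite height by assumption. Theorem~\ref{thm:main} now applies and produces an even periodic ring spectrum~$E$ with $\pi_0E\cong R$ and $\Gamma_E\cong\fBr_{X_R}$. Reducing the latter isomorphism modulo the maximal ideal of~$R$ identifies the reduction of $\Gamma_E$ to~$k$ with $\fBr_X$, exactly as required.

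The conceptual obstacle, and essentially the only one beyond Theorem~\ref{thm:main} itself, is the identification of the etale local ring of the Deligne-Mumford stack~$\M_{2d}$ at~$X$ with the (strict) henselisation of the local ring of a scheme that maps etale to~$\M_{2d}$. This is precisely what the slice theorem supplies; once it is in hand, flat base change and the functoriality of the formal Brauer group make the rest a matter of bookkeeping. It is to accommodate the henselian ring~$R$ arising this way that Theorem~\ref{thm:main} was stated for arbitrary noetherian local $\ZZ_{(p)}$-algebras rather than only for complete ones.
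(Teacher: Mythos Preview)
Your proof is correct and follows essentially the same route as the paper: identify the etale local ring of $\M_{2d}$ at $X$ with the strict henselisation of $\O_{U,u}$ for an etale neighbourhood $(U,u)$, observe that the resulting classifying map $\Spec(R)\to\M_{2d}$ is flat because henselisation and etale maps are flat, and then invoke Theorem~\ref{thm:main}. The only difference is cosmetic: you produce a single etale neighbourhood via the slice theorem, whereas the paper phrases the etale local ring as a colimit over all etale neighbourhoods before noting it is the strict henselisation of any one of them.
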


\begin{proof}
  The local ring~$\widetilde\O_{\M_{2d},X}$ (with a tilde to indicate
  the etale topology) is the colimit of the local rings~$\O_{U,u}$,
  where~$(U,u)$ runs over the etale neighbourhoods
	\begin{center}
  	\mbox{ 
		\xymatrix{
		\Spec(k)\ar[dr]_u\ar[rr]^X && \M_{2d}\\
			&U\ar[ur]_{\mathrm{etale}}&
    		} 
  	}
	\end{center}
	of~$X$ in~$\M_{2d}$.  As in the proof of the previous
        proposition, the hypotheses of Theorem~\ref{thm:main} are
        satisfied for the local rings~$\O_{U,u}$.
        As~$\widetilde\O_{\M_{2d},X}$ is the (strict) henselisation of
        the~$\O_{U,u}$, it is flat over them. Therefore, the
        hypotheses of Theorem~\ref{thm:main} are satisfied
        for~$\widetilde\O_{\M_{2d},X}$ as well.
\end{proof}

There is a weaker variant of the preceding proposition which replaces
the henselian rings by complete rings. To state it,
let~$\widehat\O_{\M_{2d},X}$ be the completion of the local
ring~$\widetilde\O_{\M_{2d},X}$. Its (formal) spectrum is, by
definition, the formal deformation space of~$\M_{2d}$ at~$X$, so that,
conversely,~$\widehat\O_{\M_{2d},X}$ is the ring of formal functions
on it.

\begin{prop}\label{prop:formal}
  If~$X\colon\Spec(k)\rightarrow\M_{2d}$ is a geometric point of
  finite height and characteristic prime to~$2d$, there is an even
  periodic ring spectrum~$E$ such that~\hbox{$\pi_0E$} is isomorphic
  to the ring of formal functions on the formal deformation space
  of~$\M_{2d}$ in~$X$ and such that the reduction of~\hbox{$\Gamma_E$} to~$k$
  is the formal Brauer group of~$X$.
\end{prop}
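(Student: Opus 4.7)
The plan is to apply Theorem~\ref{thm:main} to the ring $R=\widehat\O_{\M_{2d},X}$, paralleling the proof of Proposition~\ref{prop:local} with an additional completion step at the end. First I would verify that $R$ is of the right type: the strict henselisation $\widetilde\O_{\M_{2d},X}$ is already a noetherian local $\ZZ_{(p)}$-algebra with residue field $k$ of characteristic $p$ coprime to $2d$ (as established in the previous proof), and its $m$-adic completion inherits these properties, since the completion of a noetherian local ring at its maximal ideal is again noetherian and local with the same residue field, and $2d$ remains a unit.

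Next I would produce the polarised K3 surface over $R$ by pulling back the K3 surface over $\widetilde\O_{\M_{2d},X}$ (furnished by Proposition~\ref{prop:local}) along the completion map. Its closed fibre over $k$ is the original~$X$, and therefore has finite height by hypothesis.

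The key step is then flatness of the classifying morphism $\Spec(R)\rightarrow\M_{2d}$. It factors as
\begin{displaymath}
\Spec(\widehat\O_{\M_{2d},X})\longrightarrow\Spec(\widetilde\O_{\M_{2d},X})\longrightarrow\M_{2d},
\end{displaymath}
in which the second map is flat by Proposition~\ref{prop:local} and the first is flat because the completion of a noetherian local ring at its maximal ideal is a flat extension. The composition is flat, so Theorem~\ref{thm:main} applies and yields an even periodic ring spectrum $E$ with $\pi_0E\cong R$, whose associated formal group $\Gamma_E$ is isomorphic to the formal Brauer group of the lifted K3 surface over $R$; its reduction to the residue field $k$ then recovers $\fBr_X$ of the original geometric point. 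The only real obstacle is ensuring flatness passes through completion; the rest is a direct application of the main theorem and a minor modification of the previous argument.
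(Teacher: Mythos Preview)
Your proposal is correct and follows exactly the paper's own argument: the paper's proof simply observes that the completion map $\widetilde\O_{\M_{2d},X}\rightarrow\widehat\O_{\M_{2d},X}$ is flat because the rings are noetherian, and then says one may argue as in Proposition~\ref{prop:local}. You have spelled out in detail what that terse ``argue as before'' entails, but the route is the same.
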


\begin{proof}
  As all rings involved are noetherian, the
  completion~$\widetilde\O_{\M_{2d},X}\rightarrow\widehat\O_{\M_{2d},X}$
  is flat and one may argue as before.
\end{proof}


\section*{Acknowledgements}

I would like to thank those who have discussed this material with me
before, especially Mike Hopkins and Jack Morava; my debts to them are
clear.



\vfill

\parbox{\linewidth}{%
Markus Szymik\\
Department of Mathematical Sciences\\
NTNU Norwegian University of Science and Technology\\
7491 Trondheim\\
NORWAY\\
\href{mailto:markus.szymik@ntnu.no}{markus.szymik@ntnu.no}\\
\href{https://folk.ntnu.no/markussz}{folk.ntnu.no/markussz}}


\begin{thebibliography}{99}

\bibitem{Artin} {M. Artin}, Supersingular~K3 surfaces,
  Ann. Scient. Ecole Norm. Sup. 7 (1974) 543--658.

\bibitem{Artin+Mazur} {M. Artin, B. Mazur}, Formal groups
  arising from algebraic varieties, Ann. Scient. Ecole Norm. Sup. 10
  (1977) 87--132.
  
\bibitem{Badescu} {L. B\u{a}descu}, Algebraic surfaces,
  Springer-Verlag, New York, 2001.
   
\bibitem{Beauville} {A. Beauville}, Complex algebraic
  surfaces, London Mathematical Society Student Texts 34, Cambridge
  University Press, Cambridge, 1996.

\bibitem{Behrens+Lawson} {M. Behrens, T. Lawson}, Topological
  automorphic forms, Preprint.

\bibitem{Deligne:Relevements} {P. Deligne}, Rel\`{e}vement des
  surfaces~K3 en caracteristique nulle, prepared for publication by
  Luc Illusie, 58--79 in Algebraic surfaces (Orsay 1976-78), Springer,
  1981.

\bibitem{DM} {P. Deligne, D. Mumford}, 
The irreducibility of the space of curves of given genus, 
Inst. Hautes \'Etudes Sci. Publ. Math. No. 36 (1969) 75--109. 

\bibitem{Devoto} {J.A. Devoto}, Quaternionic elliptic objects
  and~$K3$-cohomology, 26--43 in Elliptic cohomology, London
  Math. Soc. Lecture Note Ser. 342, Cambridge Univ. Press, Cambridge,
  2007.
  
\bibitem{Fröhlich} {A. Fr\"ohlich}, Formal groups,
Lecture Notes in Mathematics 74, Springer-Verlag, Berlin-New York, 1968.

\bibitem{vanderGeerKatsura:Stratification} {G. van der Geer,
  T. Katsura}, On a stratification of the moduli of~K3 surfaces,
  J. Eur. Math. Soc. 2 (2000) 259--290.

\bibitem{vanderGeerKatsura:Heights} {G. van der Geer,
    T. Katsura}, On the height of Calabi-Yau varieties in positive
  characteristic, Doc. Math. 8 (2003) 97--113.
		
\bibitem{GoerssHopkins:Summary} {P.G. Goerss, M.J. Hopkins},
  Moduli spaces of commutative ring spectra, 151--200 in Structured
  ring spectra, London Math. Soc. Lecture Note Ser. 315, Cambridge
  Univ. Press, 2004.

\bibitem{Gorbounov+Mahowald} {V. Gorbounov, M. Mahowald}, Formal
  completion of the Jacobians of plane curves and higher
  real~$K$-theories, J. Pure Appl. Algebra 145 (2000) 293--308.

\bibitem{Goto} {Y. Goto}, A note on the height of the formal
  Brauer group of a~K3 surface, Canad. Math. Bull. 47 (2004) 22--29.

\bibitem{Hirokado} {M. Hirokado}, A non-liftable Calabi-Yau
  threefold in characteristic 3, Tohoku Math. J. 51 (1999) 479--487.

\bibitem{Hopkins:ICM1994} {M.J. Hopkins}, Topological modular
  forms, the Witten genus, and the theorem of the cube, 554--565 in
  Proceedings of the International Congress of Mathematicians 1994,
  Vol. I, Birkh\"auser, 1995.

\bibitem{Landweber:Exactness} {P.S. Landweber}, Homological
  properties of comodules over~$\mathrm{MU}_*(\mathrm{MU})$
  and~$\mathrm{BP}_*(\mathrm{BP})$, American Journal of Mathematics 98
  (1976) 591--610.

\bibitem{LRS} {P.S. Landweber, D.C. Ravenel,, R.E. Stong},
  Periodic cohomology theories defined by elliptic curves,
  317--337 in The \v Cech centennial (Boston 1993), Contemp. Math. 181,
  Amer. Math. Soc., Providence, RI, 1995.

\bibitem{LM-B} {G. Laumon, L. Moret-Bailly}, 
Champs alg\'ebriques, Springer-Verlag, Berlin, 2000.

\bibitem{Lazard:Classification} {M. Lazard}, Sur les groupes
  de Lie formels \`a un param\`etre, Bull. Soc. Math. France 83 (1955)
  251--274.

\bibitem{Lazard:LectureNotes} {M. Lazard}, 
Commutative formal groups, 
Lecture Notes in Mathematics 443, Springer-Verlag, Berlin-New York, 1975.

\bibitem{Lurie:Elliptic} {J. Lurie}, A Survey of Elliptic
  Cohomology, Preprint.
  
\bibitem{MMSS} {M.A.Mandell, J.P. May, S. Schwede,,
    B. Shipley}, Model categories of diagram spectra, Proc. London
  Math. Soc. 82 (2001) 441--512.

\bibitem{Miller:Landweber} {H. Miller}, Sheaves, gradings, and the
  exact functor theorem, Preprint. 
		
\bibitem{Morava} {J. Morava}, Report on Mike Hopkins' work on
  Calabi-Yau cohomology, Midwest Topology Seminar, Spring 1992.

\bibitem{Mukai} {S. Mukai}, Curves,~K3 surfaces and Fano 3-folds
  of genus~$\leqslant10$, 357--377 in Algebraic Geometry and Commutative
  Algebra I, Kinokuniya, Tokyo, 1988.

\bibitem{Ogus:Crystals} {A. Ogus}, Supersingular~K3 crystals,
  3--86 in Journ\'ees de G\'eom\'etrie Alg\'e\-brique de Rennes (Rennes
  1978), Vol. II, Ast\'erisque, 64, Soc. Math. France, 1979.

\bibitem{Ogus:HeightStrata} {A. Ogus}, Singularities of the height
  strata in the moduli of~K3 surfaces, 325--343 in Moduli of Abelian
  Varieties (Texel Island 1999), Progr. in Math. 195, Birkh\"auser, 2001.

\bibitem{Ravenel} {D.C. Ravenel}, Toward higher chromatic analogs
  of elliptic cohomology, 286--305 in Elliptic cohomology, London
  Math. Soc. Lecture Note Ser. 342, Cambridge Univ. Press, Cambridge,
  2007.

\bibitem{Rezk:HopkinsMiller} {C. Rezk}, Notes on the
  Hopkins-Miller theorem, 313--366 in Homotopy theory via algebraic
  geometry and group representations (Evanston 1997),
  Contemp. Math. 220, Amer. Math. Soc., 1998.

\bibitem{Rizov:MixedCharacteristic} {J. Rizov}, Moduli stacks of
  polarized~K3 surfaces in mixed characteristic, Serdica Math. J. 32
  (2006) 131--178.

\bibitem{Stienstra:fgl} {J. Stienstra}, Formal group laws
  arising from algebraic varieties, Amer. J.  Math. 109 (1987)
  907--925.
  
\bibitem{Szymik} M. Szymik. Crystals and derived local moduli for ordinary~K3 surfaces. Adv. Math. 228 (2011) 1--21.

\bibitem{Thomas:Survey} {C.B. Thomas}, Elliptic cohomology,
  409--439 in Surveys on surgery theory, Vol. 1, Ann. of Math. Stud.
  145, Princeton Univ. Press, 2000.

\bibitem{Thomas:Book} {C.B. Thomas}, Elliptic cohomology, Kluwer
  Academic/Plenum Publishers, 1999.

\bibitem{yagita} {N. Yagita}, The exact functor theorem
  for~$BP_*/I_n$-theory, Proc. Japan Acad 52 (1976) 1--3.

\bibitem{yosimura} {Z. Yosimura}, Projective dimension of
  Brown-Peterson homology with modulo~$(p,v_1,\dots,v_{n-1})$
  coefficients, Osaka J. Math. 13 (1976) 289--309.

\bibitem{Yui} {N. Yui}, Formal Brauer groups arising from certain
  weighted~K3 surfaces, J. Pure Appl. Algebra 142 (1999) 271--296.

\end{thebibliography}
\end{document}